\providecommand{\U}[1]{\protect\rule{.1in}{.1in}}
\numberwithin{equation}{section}
\newtheorem{theorem}{Theorem}[section]
\newtheorem{lemma}[theorem]{Lemma}
\newtheorem{corollary}[theorem]{Corollary}
\newtheorem{proposition}[theorem]{Proposition}
\newtheorem{remark}[theorem]{Remark}
\newtheorem{example}[theorem]{Example}
\def\<{\langle}
\def\>{\rangle}
\def\d{{\rm d}}
\def\div{{\rm div}}
\def\E{\mathbb{E}}
\def\N{\mathbb{N}}
\def\P{\mathbb{P}}
\def\R{\mathbb{R}}
\def\T{\mathbb{T}}
\def\Z{\mathbb{Z}}
\def\eps{\varepsilon}
\begin{document}


\title{Regularization by noise for the point vortex model of\\ mSQG equations}

\author{Dejun Luo\footnote{Email: luodj@amss.ac.cn. RCSDS, Academy of Mathematics and Systems Science, Chinese Academy of Sciences, Beijing 100190, China, and School of Mathematical Sciences, University of the Chinese Academy of Sciences, Beijing 100049, China. } \quad Martin Saal\footnote{Email: msaal@mathematik.tu-darmstadt.de. Department of Mathematics, TU Darmstadt, Schlossgartenstr. 7, 64289 Darmstadt, Germany }}

\maketitle

\begin{abstract}
We consider the point vortex model corresponding to the modified Surface Quasi-Geostrophic (mSQG) equations on the two dimensional torus. It is known that this model is well posed for almost every initial conditions. We show that, when the system is perturbed by a certain space-dependent noise, it admits a unique global solution for any initial configuration. We also present an explicit example for the deterministic system where three different point vortices collapse.
\end{abstract}

\textbf{Keywords:} Point vortices, modified Surface Quasi-Geostrophic equations, space-dependent noise, absolute continuity


\section{Introduction}

Let $\T^2=\R^2/\Z^2$ be the 2D torus. We will think of $\T^2$ as $[-1/2,1/2]^2$ equipped with the periodic boundary condition. Recall the differential operators $\nabla^\perp=(\partial_{x_2}, -\partial_{x_1})$ and $\Delta= \partial_{x_1}^2 + \partial_{x_2}^2$. Consider the Surface Quasi-Geostrophic (SQG) equation on $\T^2$:
  \begin{equation*}
  \left\{ \aligned
  & \partial_t \theta+ u\cdot \nabla\theta =0, \\
  & u= \nabla^\perp (-\Delta)^{-1/2} \theta,
  \endaligned
  \right.
  \end{equation*}
which is widely used in meteorological and oceanic flows to describe the temperature $\theta$ in a rapidly rotating stratified fluid with uniform potential vorticity, see e.g. \cite{HPGS, SBHMT} for the geophysical background. It is known (see \cite{CMT}) that the above equation in 2D has some structural similarities with the 3D Euler equations, for this reason it attracted a lot of attention in the mathematics community.

We are interested in the following modified Surface Quasi-Geostrophic (mSQG) equation:
  \begin{equation}\label{mSQG}
  \left\{ \aligned
  & \partial_t \theta+ u\cdot \nabla\theta =0, \\
  & u= \nabla^\perp (-\Delta)^{-(1+\eps)/2} \theta,
  \endaligned
  \right.
  \end{equation}
where $\eps\in [0,1]$. If $\eps=0$, then the above equation reduces to the SQG equation, while if $\eps=1$, then \eqref{mSQG} gives rise to the vorticity form of 2D Euler equation. Thus, the mSQG equation serves as a bridge between the SQG equation and the 2D Euler equation. This family of equations was introduced in \cite{CCW, CFMR} to approach the SQG equation by smoother models. We refer the readers to the introduction of \cite{FS} for a detailed list of well posedness results on the equations \eqref{mSQG}.

We will study the model of $N$ point vortices corresponding to \eqref{mSQG}:
  \begin{equation}\label{vortex-model}
  \frac{\d x_i(t)}{\d t}= \sum_{j=1,j\neq i}^N \xi_j K_\eps (x_i(t) -x_j(t)),\quad i=1,\cdots, N,
  \end{equation}
where $\xi_j\in \R\setminus \{0\}$ is the intensity of the vortex point $x_j(t)$, $j=1,\cdots, N$, and
\begin{align*}
 K_\eps: \T^2\setminus\{0\}\to \R^2, \quad  K_\eps(x)=c_\eps \sum_{n\in\Z^2} \frac{(x+n)^\perp}{\left|x+n\right|^{3-\eps}}
\end{align*}
for some $c_\eps>0$ is the kernel associated to the operator $\nabla^\perp (-\Delta)^{-(1+\eps)/2}$. When $\eps=1$, $K_1$ is the well known Biot--Savart kernel on $\T^2$; the vortex dynamics \eqref{vortex-model} on the plane was treated systematically by Marchioro and Pulvirenti in \cite[Chap. 4]{MP}. In particular, there exist examples of initial configurations (see \cite[Section 4.2]{MP}) starting from which the vortex system collapses in finite time, i.e., initially distinct vortex points meet each other. Nevertheless, it can be shown that, for ${\rm Leb}_{(\T^2)^N}$-a.e. initial configuration in $(\T^2)^N$, the system \eqref{vortex-model} of equations has a global solution, see \cite{DP}, \cite[Section 4.4]{MP} or \cite[Appendix]{F1}. The readers can find in \cite[Section III]{BB} some discussions on possible collapse of point vortex systems corresponding to the mSQG equation, i.e. $\eps\in (0,1)$ in \eqref{vortex-model}. Following the ideas of Marchioro and Pulvirenti  \cite[Section 4.6]{MP}, we also provide in the last section some explicit conditions for collapse. The almost everywhere well posedness of the system \eqref{vortex-model} has been proved in \cite{CGM, FS, GR}.

It is clear that one can improve the well posedness of the deterministic system \eqref{vortex-model} by perturbing vortex points with mutually independent Brownian motions; however, such stochastic system does not correspond to the Lagrangian formulation of the stochastic mSQG equation. On the other hand, Flandoli et al. \cite{FGP} proved that, when perturbed by a certain space-dependent non-degenerate noise, the stochastic vortex model of the 2D Euler equation is fully well posed for every initial configuration. This is a typical example of the phenomenon of regularization by noise. Our purpose is to show that similar noises also restore well posedness of the vortex model \eqref{vortex-model} of mSQG equation. To this end, we perturb the system by a space-dependent random noise:
  \begin{equation}\label{stoch-vortex-model}
  \d x_i(t) =\sum_{j=1,j\neq i}^N \xi_j K_\eps(x_i(t) -x_j(t))\,\d t + \sum_{l=1}^\infty \sigma_l(x_i(t))\circ \d W^l_t, \quad i=1,\cdots, N,
  \end{equation}
where $\{W^l_t\}_{l\geq 1}$ is a sequence of independent standard Brownian motions defined on some probability space $(\Omega, \mathcal F, \P)$, and $\{\sigma_l\}_{l\geq 1}$ a family of smooth divergence free vector fields on $\T^2$. Suppose that the above stochastic system is globally well posed and define
  $$\theta_t = \sum_{i=1}^N \xi_i \delta_{x_i(t)},\quad t>0.$$
Then, heuristically, one can show (cf. \cite[Section 2.3]{FGP}) that $\theta_t$ satisfies the stochastic mSQG equation: for any $\phi\in C^\infty(\T^2)$,
  \begin{equation}\label{stoch-mSQG}
  \d \<\theta_t, \phi\>= \<\theta_t, u_t\cdot \nabla \phi\> \,\d t + \sum_{l=1}^\infty \<\theta_t, \sigma_l\cdot \nabla \phi\>\circ \d W^l_t,
  \end{equation}
provided that the vector field
  $$u_t(x) = \<\theta_t, K_\eps(x-\cdot)\>= \sum_{i=1}^N \xi_i K_\eps(x- x_i(t))$$
is properly interpreted at the vortex points $x_i(t),\, 1\leq i\leq N$. Indeed, \eqref{stoch-mSQG} makes sense if
  $$u_t(x_i(t))= \sum_{j=1, j\neq i}^N \xi_j K_\eps(x_i(t)- x_j(t)),$$
which is in accordance with \eqref{vortex-model}.

To state our assumptions, we introduce the vector fields on $(\T^2)^N$:
  $$A_l(X)= (\sigma_l(x_1), \ldots, \sigma_l(x_N)),\quad X=(x_1,\ldots, x_N)\in (\T^2)^N,\ l\geq 1.$$
Denote by $\Delta_N$ the generalized diagonal of $(\T^2)^N$, i.e.
  $$\Delta_N= \big\{ X=(x_1,\ldots, x_N)\in (\T^2)^N: \exists \ i\neq j \mbox{ such that } x_i=x_j \big\}.$$
Here are the main assumptions on $\{\sigma_l\}_{l\geq 1}$ (see Section \ref{sect-example} for an example):

\begin{itemize}
\item[\textbf{(H1)}] The vector fields $\sigma_l$ are periodic, smooth and $\div(\sigma_l)=0$ for all $l\geq 1$.
\item[\textbf{(H2)}] (Ellipticity) The vector space spanned by the vectors $\{A_l(X)\}_{l\geq 1}$ is the whole $\R^{2N}$ for every $X\in \Delta_N^c = (\T^2)^N \setminus \Delta_N$.
\end{itemize}

We give some remarks on the above hypotheses.

\begin{remark}\label{1-rem}
\begin{itemize}
\item[\rm (a)] By multiplying each $\sigma_l$ with a positive constant $c_l$ which decreases fast enough to 0 (cf. Example \ref{exam-1}), we can assume that the SDE
  $$\d y(t) = \sum_{l=1}^\infty \sigma_l(y(t))\circ \d W^l_t, \quad y(0)\in \T^2 $$
generates a stochastic flow of $C^1$-diffeomorphisms on $\T^2$. Indeed, it is easy to show that $\{c_l \sigma_l\}_{l\geq 1}$ still satisfy the hypotheses {\rm \textbf{(H1)}} and {\rm \textbf{(H2)}}. In particular, there is a constant $C>0$ such that
  $$\sum_{l=1}^\infty c_l^2 |\sigma_l(x)-\sigma_l(y)|^2 \leq C|x-y|^2 \quad \mbox{for all } x,y\in \T^2,$$
which holds with $C= \sum_{l= 1}^\infty c_l^2 \|\nabla \sigma_l \|_\infty^2 <\infty $.

\item[\rm (b)] We can even assume that $\sum_{l=1}^\infty \sigma_l(x)\cdot \nabla \sigma_l(x)=0$ for all $x\in \T^2$. In this case, the Stratonovich equation \eqref{stoch-vortex-model} has the same form with the It\^o equation. To show this assumption, note that in the space-homogeneous case (see the discussion below \cite[Lemma 3.2]{DFV}), we have for all $\alpha,\beta =1,2$,
      $$\sum_{l=1}^\infty \sigma_l^\alpha(x) \sigma_l^\beta(x) = Q^{\alpha \beta}\quad \mbox{for all } x\in \T^2,$$
    where $Q^{\alpha \beta}$ is a constant. Therefore,
      $$\sum_{l=1}^\infty \bigg[ \bigg(\frac{\partial}{\partial x_\alpha} \sigma_l^\alpha(x)\bigg) \sigma_l^\beta(x) + \sigma_l^\alpha(x) \frac{\partial}{\partial x_\alpha} \sigma_l^\beta(x) \bigg]=0. $$
    Summing over $\alpha=1,2$ and using  $\div (\sigma_j)=0$, we obtain $\sum_{l=1}^\infty \sigma_l(x)\cdot \nabla \sigma_l^\beta (x)=0$.
\end{itemize}
\end{remark}

Now we can state our main result; recall that the result proved in \cite{FGP} corresponds to the case $\eps =1$.

\begin{theorem}\label{main-result}
Fix any $\eps\in(0,1)$. Under the hypotheses {\rm \textbf{(H1)}} and {\rm \textbf{(H2)}}, the stochastic point vortex system \eqref{stoch-vortex-model} has a unique global solution for any initial data $X\in \Delta_N^c$.
\end{theorem}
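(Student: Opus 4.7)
The plan is to adapt the strategy of Flandoli, Gubinelli and Priola \cite{FGP} (developed for $\eps = 1$) to the more singular mSQG kernel. First, since the drift and diffusion coefficients in \eqref{stoch-vortex-model} are smooth on the open set $\Delta_N^c$, classical SDE theory yields a unique strong solution up to the first collision time
\[
\tau := \inf\{t > 0 : X_t \in \Delta_N\},
\]
and, upon invoking Remark \ref{1-rem}(b) to identify the Stratonovich and It\^o forms, the approximating hitting times $\tau_n := \inf\{t \geq 0 : \min_{i \neq j}|x_i(t) - x_j(t)| \leq 1/n\}$ satisfy $\tau_n \uparrow \tau$. Both global existence and pathwise uniqueness therefore reduce to showing $\tau_n \to \infty$ almost surely.

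The core idea is to construct a Lyapunov function $V : \Delta_N^c \to \R_+$ with $V(X) \to \infty$ as $X \to \Delta_N$ and $LV \leq C(1+V)$, where $L$ is the generator of \eqref{stoch-vortex-model} in It\^o form. Guided by the Green function $G_\eps(y) \sim c|y|^{\eps-1}$ associated to the mSQG kernel, I would try $V(X) = \sum_{i \neq j} g(x_i - x_j)$ for a symmetric $g$ singular at the origin (either a regularised $|y|^{\eps-1}$ or a tunable negative power $|y|^{-\delta}$ with $\delta>0$ small). Once the dissipation estimate is in hand, It\^o's formula combined with Gr\"onwall's lemma gives $\E[V(X_{t \wedge \tau_n})] \le (V(X_0) + Ct) e^{Ct}$. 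Since $\inf\{V(X) : X \in \Delta_N^c,\, \min_{i \neq j}|x_i - x_j| \leq 1/n\} \to \infty$, Chebyshev's inequality forces $\P(\tau_n \leq t) \to 0$ and hence $\tau = \infty$ almost surely.

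The crucial step is the estimate on $LV$. The drift generator $L_{\mathrm{det}}V$ decomposes into self-interaction terms with $k = j$, which vanish because $K_\eps(y) = \nabla^\perp G_\eps(y)$ is perpendicular to $y$, plus cross-terms over triples $(i,j,k)$ of pairwise distinct indices of the form $\xi_k\, \nabla g(x_i - x_j) \cdot K_\eps(x_i - x_k)$, singular of order $|x_i-x_j|^{-\eta}|x_i-x_k|^{\eps-2}$ with $\eta$ depending on $g$. For the noise part, using Remark \ref{1-rem}(b) and symmetrising over particles produces
\[
L_{\mathrm{noise}}V(X) = \tfrac{1}{2}\sum_{i \neq j} \sum_{l \geq 1} \bigl(\sigma_l(x_i) - \sigma_l(x_j)\bigr) \otimes \bigl(\sigma_l(x_i) - \sigma_l(x_j)\bigr) : \nabla^2 g(x_i - x_j),
\]
and the bound $\sum_l |\sigma_l(x_i) - \sigma_l(x_j)|^2 \leq C|x_i - x_j|^2$ from Remark \ref{1-rem}(a) absorbs two powers of the Hessian singularity, leaving a contribution of the same order as $g(x_i - x_j)$ itself.

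The main obstacle is controlling the drift cross-terms: in the Euler case \cite{FGP} the logarithmic nature of $G_1$ renders the noise contribution essentially bounded and allows the drift cross-sums to be collected into Hamiltonian-type cancellations, whereas for $\eps \in (0,1)$ the extra singularity of $K_\eps$ demands a careful choice of the exponent in $g$ together with Young-type inequalities matching $|x_i-x_j|^{-\eta}|x_i-x_k|^{\eps-2}$ against $V$. Hypothesis \textbf{(H2)} enters essentially at this stage, guaranteeing a uniformly non-degenerate noise away from $\Delta_N$ that one can exploit to refine the argument (e.g., by passing to a supermartingale version of $V$), while \textbf{(H1)} underpins the space-homogeneous covariance identity used in the display above. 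Pathwise uniqueness of the global solution then follows from the non-attainability of $\Delta_N$ together with the local Lipschitz property of the coefficients on each set $\{d(X,\Delta_N) \geq 1/n\}$.
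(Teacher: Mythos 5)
There is a genuine gap, and it sits exactly where you flag ``the main obstacle.'' Your plan rests on a pointwise Lyapunov estimate $LV\leq C(1+V)$ for a fixed initial configuration, followed by Gr\"onwall and Chebyshev. But the surviving drift cross-terms over triples of distinct indices behave like $|x_i-x_j|^{-(2-\eps)}|x_i-x_k|^{-(2-\eps)}$, which near a triple collision (all three points at mutual distance $r$) is of order $r^{-2(2-\eps)}$, strictly more singular than any $V$ built from pairwise singularities of order $r^{-(1-\eps)}$ (the natural Green-function choice) or indeed than the term itself raised to any power appearing in $V$; no Young-type inequality closes $LV\le C(1+V)$, and the noise contribution is of the same order as $V$ (not more negative), so it cannot compensate. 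The paper does \emph{not} prove a pointwise generator bound. Instead it regularizes the kernel, applies It\^o's formula to $g^\delta(X^\delta_t(X_0))$, and then \emph{integrates over the initial condition} $X_0$: since the regularized flow preserves Lebesgue measure on $(\T^2)^N$ (the coefficients are divergence free), the time integral of the bad triple term $h_1(X^\delta_t(X_0))$ becomes, after averaging in $X_0$, just $T\int h_1\,\d Y$, which is finite because $2(2-\eps)<4$. This yields a bound on $\int\E\sup_t g^\delta$ uniform in $\delta$, hence via Chebyshev and Borel--Cantelli a statement only for ${\rm Leb}$-a.e.\ $X_0$. Your scheme has no substitute for this averaging step.

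The second, equally essential, missing ingredient is the upgrade from ``a.e.\ $X_0$'' to ``every $X_0\in\Delta_N^c$,'' which is the actual content of the theorem (a.e.\ well-posedness of the deterministic system is already known). You assign hypothesis \textbf{(H2)} a vague role (``a supermartingale version of $V$''), but its real use is through the Bouleau--Hirsch criterion (Corollary 2.4 of the paper): \textbf{(H2)} makes the covariance matrix of the noise nondegenerate at every $X_0\in\Delta_N^c$, so the law of $X^\delta_\eta(X_0)$ is absolutely continuous with respect to Lebesgue measure for every $\eta>0$. Combined with the Markov property, after an arbitrarily short time the solution lands almost surely in the full-measure set of good initial data, and letting $\eta\downarrow 0$ gives global existence and uniqueness from every starting point. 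Without this density argument your proof, even if the Lyapunov estimate were repaired, could not reach all initial configurations --- and the explicit three-vortex collapse in Section 4 shows that some mechanism genuinely tied to the noise is indispensable.
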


This paper is organized as follows. In Section 2 we provide some preliminary results concerning the regularity of the kernel $K_\eps$, and an explicit example of vector fields satisfying the hypotheses. We also recall a result on the existence of densities for solutions to stochastic differential equations. Theorem \ref{main-result} will be proved in Section 3, mainly following the idea of \cite{FGP}. Finally, in Section 4, we carry out some detailed computations which lead to explicit conditions for possible collapse of the deterministic vortex system \eqref{vortex-model}.

\section{Some preparations}

In this section we make some preparations by recalling regularity properties of the kernel $K_\eps$ and the noise that we will use to regularize the vortex system \eqref{vortex-model}. A result on the existence of densities for solutions to SDEs is recalled in Section \ref{subsec-density} for later use.

\subsection{Regularity of the kernel $K_\eps$} \label{sect-kernel}

The singular interaction kernel $K_\eps$ in \eqref{vortex-model} is locally of the form
  $$K_\eps(x)\sim \frac{x^\perp}{|x|^{3-\eps}}, \quad \eps\in (0,1).$$
Indeed, $K_\eps=\nabla^\perp G_\eps = (\partial_{x_2} G_\eps, - \partial_{x_1} G_\eps)$, where
\begin{align*}
 G_\eps: \T^2\setminus\{0\}\to \R, \quad  G_\eps(x)=c_\eps\sum_{n\in\Z^2} \frac{1}{\left|x+n\right|^{1-\eps}}
\end{align*}
is the Green function associated to the operator $\nabla^\perp (-\Delta)^{-(1+\eps)/2}$; up to a multiplicative constant, one has
  $$G_\eps(x) \sim - \frac{1}{|x|^{1-\eps}} \quad \mbox{as } |x|\to 0.$$

Since the function $G_\eps$ is singular near the origin, for any $\delta\in (0,1)$, we consider a smooth periodic function $G_\eps^\delta:\T^2 \to \R$ satisfying
  $$\aligned
  G_\eps^\delta(x) & = G_\eps(x) \quad \mbox{for } |x|\geq \delta,
  \endaligned$$
and for $i=0,1,2$,
  \begin{equation}\label{property-regular-Green}
  \big|\nabla^i G_\eps^\delta(x) \big|  \leq \frac{C}{|x|^{i+1-\eps}}
  \end{equation}
for all $|x|>0$ and some constant $C>0$. In the next section we shall make use of the regular kernel
  \begin{equation}\label{regular-kernel}
  K_\eps^\delta(x) = \nabla^\perp G_\eps^\delta(x), \quad x\in \T^2,
  \end{equation}
which is a smooth and divergence free vector field on $\T^2$.

\subsection{Vector fields verifying the hypotheses \textbf{(H1)} and \textbf{(H2)}} \label{sect-example}

The readers are referred to \cite[Section 3]{DFV} for general discussions on the examples of vector fields satisfying the conditions \textbf{(H1)} and \textbf{(H2)}. In our special case of 2D torus $\T^2$, we can present a more explicit example of vector fields $\{\sigma_l\}_{l\geq 1}$ with the above-mentioned properties.

\begin{example}\label{exam-1}
Let $\{e_k\}_{k\in \Z_0^2}$ be defined as
  \begin{equation*}\label{real-basis}
  e_k(x) = \sqrt{2} \begin{cases}
  \cos(2\pi k\cdot x), & k\in \Z^2_+ , \\
  \sin(2\pi k\cdot x), & k\in \Z^2_-,
  \end{cases} \quad  x\in \T^2,
  \end{equation*}
where $\Z_0^2= \Z^2 \setminus \{0\}$, $\Z^2_+ = \big\{k\in \Z^2_0: (k_1 >0) \mbox{ or } (k_1=0,\, k_2>0) \big\}$ and $\Z^2_- = -\Z^2_+$. This family of functions is an orthonormal basis of square integrable functions on $\T^2$ with vanishing mean. Define
  \begin{equation}
  \sigma_k = \frac{k^\perp}{|k|^\gamma} e_k, \quad k\in \Z_0^2,
  \end{equation}
where $\gamma>3$ is a constant. It is obvious that the family $\{\sigma_k\}_{k\in \Z_0^2}$ of vector fields fulfill {\rm \textbf{(H1)}}. Moreover, one can also easily check that they satisfy the properties discussed in (a) and (b) of Remark \ref{1-rem}.

Next we show that $\{\sigma_k\}_{k\in \Z_0^2}$ also verifies {\rm \textbf{(H2)}}. For any $X= (x_1,\ldots, x_N)\in \Delta_N^c$ and $V= (v_1, \ldots, v_N)\in \R^{2N}$,
  $$\sum_{k\in \Z_0^2} \<A_k(X),V\>_{\R^{2N}}^2 = \sum_{k\in \Z_0^2} \frac1{|k|^{2\gamma}} \Bigg(\sum_{\alpha=1}^N (k^\perp \cdot v_\alpha) e_k(x_\alpha) \Bigg)^2.$$
It is enough to show that, if the above quantity vanishes, then one must have $v_i=0$ for all $i=1,2, \ldots, N$. The arguments are analogous to those of \cite[Remark 3.3]{DFV}. If the above quantity vanishes, then for any $k\in \Z^2_0$, we have $\sum_{\alpha=1}^N e_k( x_\alpha) ( k^\perp\cdot v_\alpha ) =0$. Equivalently,
  $$\sum_{\alpha=1}^N e_k( x_\alpha) \big( k\cdot v_\alpha^\perp \big) =0 \quad \mbox{for all } k\in \Z_0^2. $$
Take a smooth real valued function $\varphi$ on $\T^2$ with zero mean, then
  $$0= \sum_{\alpha=1}^N \sum_{k\in \Z^2} \hat\varphi(k) \, e_k( x_\alpha) \big( k\cdot v_\alpha^\perp \big) = \frac{1}{2\pi} \sum_{\alpha=1}^N \big(\nabla \varphi(x_\alpha) \cdot v_\alpha^\perp \big) . $$
Since $x_1,\ldots, x_N$ are mutually distinct, we can construct a function $\varphi$ such that $\nabla \varphi(x_\alpha) = v_\alpha^\perp$. This implies $v_\alpha =0$ for all $\alpha=1, \ldots, N$.
\end{example}

\subsection{Existence of densities for solutions of SDEs} \label{subsec-density}

Here we consider the SDE on $\R^d$ with infinitely many noises:
  \begin{equation}\label{sde}
  \d X_t = b(X_t)\,\d t + \sum_{l=1}^\infty A_l(X_t)\,\d W^l_t,\quad X_0 = x_0\in \R^d,
  \end{equation}
where $b:\R^d \to \R^d$ and $A_l:\R^d \to \R^d$ are globally Lipschitz continuous vector fields, and
  $$\sum_{l=1}^\infty |A_l(x) - A_l(y)|^2 \leq C|x-y|^2, \quad x,y\in \R^d$$
for some $C>0$. Assume also
  $$\sum_{l=1}^\infty |A_l(x)|^2 \leq C(1+|x|^2), \quad x\in \R^d, $$
thus the covariance matrix
  $$Q(x)= \sum_{l=1}^\infty A_l(x)\otimes A_l(x)$$
is well defined. The next result is due to Bouleau and Hirsh \cite{Bouleau}, see also \cite[Theorem 2.3.1]{Nualart}.

\begin{theorem}\label{thm-density}
Assume that for any $t>0$, one has
  \begin{equation}\label{thm-density-1}
  \P\bigg(\int_0^t {\bf 1}_{\{ {\rm det}\, Q(X_s) \neq 0 \}} \,\d s >0 \bigg) =1,
  \end{equation}
then the law of $X_t$ has a density with respect to the Lebesgue measure for all $t>0$.
\end{theorem}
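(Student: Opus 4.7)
The plan is to apply the classical Bouleau--Hirsch criterion from Malliavin/Dirichlet-form theory: for an SDE with globally Lipschitz coefficients, the law of $X_t$ admits a density as soon as its Malliavin covariance matrix $\gamma_t$ is almost surely invertible. No integrability of $\gamma_t^{-1}$ is required here, which is crucial because hypothesis \eqref{thm-density-1} provides only an occupation-time bound and not any uniform non-degeneracy. The proof therefore reduces to two things: (i) identifying $\gamma_t$ as a time integral involving the Jacobian flow of \eqref{sde} and the covariance $Q(X_s)$; (ii) using \eqref{thm-density-1} to rule out $\det\gamma_t=0$.

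For (i), the Lipschitz and square-summability bounds on $b$ and $\{A_l\}_{l\geq 1}$ give a unique strong solution of \eqref{sde} in $L^2(\Omega;C([0,T];\R^d))$, driven by the cylindrical Brownian motion $W=(W^l)_{l\geq 1}$ on $\ell^2$. A standard Picard-iteration/mollification argument (see \cite[Ch.~2]{Nualart}) yields $X_t\in \mathbb D^{1,2}$ with
$$D_s^l X_t = J_{s\to t}\, A_l(X_s), \qquad 0\leq s\leq t, \ l\geq 1,$$
where $J_{s\to t}$ is the Jacobian flow of \eqref{sde}, obtained as the solution of the linear matrix SDE produced by formally differentiating the drift and diffusion coefficients; it is almost surely an invertible $d\times d$ matrix. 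Summing over $l$ and integrating in $s$ gives
$$\gamma_t = \sum_{l\geq 1}\int_0^t D_s^l X_t \otimes D_s^l X_t\,\d s = \int_0^t J_{s\to t}\, Q(X_s)\, J_{s\to t}^\top\,\d s.$$

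Step (ii), which is the heart of the argument, uses the hypothesis directly. Fix a sample $\omega$ in the full-measure event defined by \eqref{thm-density-1}, and suppose there exists $v\in\R^d\setminus\{0\}$ with $v^\top \gamma_t v=0$. Then
$$0 = \int_0^t \bigl| Q(X_s)^{1/2} J_{s\to t}^\top v\bigr|^2\,\d s,$$
so $J_{s\to t}^\top v \in \ker Q(X_s)$ for Lebesgue-a.e. $s\in[0,t]$. On the set $\{s:\det Q(X_s)\neq 0\}$ this forces $J_{s\to t}^\top v = 0$, and invertibility of $J_{s\to t}$ then yields $v=0$, contradicting \eqref{thm-density-1} which says that this set has positive Lebesgue measure. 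Hence $\gamma_t$ is a.s.\ strictly positive definite, and the Bouleau--Hirsch criterion delivers the density. The main obstacle is really in step (i): because $b$ and $A_l$ are only Lipschitz (not $C^1$) and there are infinitely many noises, justifying both $X_t\in \mathbb D^{1,2}$ and the Jacobian-flow representation of $D_s X_t$ requires a careful mollification and stability argument in the cylindrical-Wiener framework. This, however, is entirely classical and is exactly what is carried out in \cite{Bouleau,Nualart}, so for the purposes of this paper one simply invokes it.
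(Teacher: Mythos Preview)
The paper does not actually prove this theorem; it is quoted verbatim as a result ``due to Bouleau and Hirsh \cite{Bouleau}, see also \cite[Theorem 2.3.1]{Nualart},'' with no argument beyond the citation. Your sketch is therefore not competing with any proof in the paper, and it correctly outlines the standard Malliavin/Dirichlet-form argument behind that cited result: place $X_t$ in $\mathbb D^{1,2}$, represent the Malliavin covariance as $\gamma_t=\int_0^t J_{s\to t}Q(X_s)J_{s\to t}^\top\,\d s$, and use \eqref{thm-density-1} together with invertibility of $J_{s\to t}$ to conclude $\det\gamma_t>0$ a.s., whence the density follows from the Bouleau--Hirsch criterion.

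One caveat worth flagging: you invoke the Jacobian flow $J_{s\to t}$ and its invertibility, but the standing assumptions are only Lipschitz, so $J_{s\to t}$ need not exist as a classical derivative of the flow. You acknowledge this and defer to mollification, which is the right move; in fact the original Bouleau--Hirsch approach in \cite{Bouleau} avoids the explicit Jacobian representation altogether and works via a Dirichlet-form approximation argument under merely Lipschitz coefficients, precisely to sidestep this issue. So your step (i) is morally right but slightly overstated in its presentation; the references you cite do the work, as you say.
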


A simple sufficient condition for \eqref{thm-density-1} is when $Q(x_0)$ is nondegenerate, i.e., the vectors $A_l(x_0),\, l\geq 1$ span the whole space $\R^d$.

\begin{corollary}\label{cor-density}
If ${\rm det}\, Q(x_0)>0$, then for any $t>0$, the law of $X_t$ is absolutely continuous with respect to the Lebesgue measure.
\end{corollary}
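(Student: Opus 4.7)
The plan is to deduce the corollary directly from Theorem \ref{thm-density} by verifying the integrability condition \eqref{thm-density-1} under the pointwise nondegeneracy assumption at $x_0$. The core observation is that $\det Q$ is a continuous function of $x$, so nondegeneracy at the single point $x_0$ propagates to a whole neighborhood, and then path continuity of $X_t$ ensures the diffusion spends a positive amount of time in that neighborhood almost surely.

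First I would establish continuity of $x\mapsto Q(x)$. The Lipschitz bound $\sum_{l\geq 1}|A_l(x)-A_l(y)|^2\leq C|x-y|^2$ together with the linear growth bound $\sum_{l\geq 1}|A_l(x)|^2\leq C(1+|x|^2)$ allow one to check, via Cauchy--Schwarz on each entry, that
$$\bigl|Q(x)-Q(y)\bigr|\leq C'\bigl(1+|x|+|y|\bigr)|x-y|,$$
so in particular $Q$ and hence $\det Q$ are continuous on $\R^d$. By the assumption $\det Q(x_0)>0$, there exists an open neighborhood $U$ of $x_0$ on which $\det Q>0$.

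Next I would introduce the exit time $\tau=\inf\{s\geq 0:X_s\notin U\}$. Since the SDE \eqref{sde} has Lipschitz coefficients, the solution $t\mapsto X_t$ has continuous sample paths almost surely, and $X_0=x_0\in U$, so $\tau>0$ $\P$-a.s. For every $s\in[0,\tau)$ we have $X_s\in U$, hence $\det Q(X_s)>0$, and consequently
$$\int_0^t{\bf 1}_{\{\det Q(X_s)\neq 0\}}\,\d s\ \geq\ t\wedge\tau\ >\ 0\quad\text{a.s.}$$
Thus condition \eqref{thm-density-1} is satisfied for every $t>0$, and Theorem \ref{thm-density} yields the absolute continuity of the law of $X_t$.

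There is no real obstacle here: the only substantive ingredients are the continuity of the diffusion matrix (a consequence of the Lipschitz hypothesis on the $A_l$) and the fact that a continuous process starting inside an open set remains there for a strictly positive random time. Both are standard, and the corollary is essentially a direct unpacking of Theorem \ref{thm-density} in the simplest nondegenerate situation.
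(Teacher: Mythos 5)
Your proof is correct and follows essentially the same route as the paper: continuity of $Q$ (hence of $\det Q$) gives a neighborhood $U$ of $x_0$ where $\det Q>0$, and path continuity of $X_t$ guarantees the process stays in $U$ for a positive random time, so condition \eqref{thm-density-1} holds and Theorem \ref{thm-density} applies. The paper only sketches this in one sentence; your write-up simply fills in the same details.
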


Indeed, by the continuity of $Q(x)$, we can find a neighborhood $U(x_0)$ of $x_0$ such that ${\rm det}\, Q(x)>0$ for all $x\in U(x_0)$. The above result follows easily from Theorem \ref{thm-density} and the continuity of paths of the solution $X_t$ to \eqref{sde}, see also \cite[Corollary 18]{FGP} for the case of finite noises and \cite[Proposition 2.3]{NNS} for the case of infinite noises.

\section{Proof of the main result}

Since the parameter $\eps\in(0,1)$ is fixed, we shall omit it throughout this section. Recall the regular kernel $K^\delta$ defined in Section \ref{sect-kernel}. We write $X^\delta_t = \big(x^\delta_1(t),\ldots, x^\delta_N(t) \big)$ for the unique solution of
  \begin{equation}\label{regular-vortex-model}
  \d x^\delta_i(t) =\sum_{j=1,j\neq i}^N \xi_j K^\delta \big(x^\delta_i(t) -x^\delta_j(t) \big)\,\d t + \sum_{l=1}^\infty \sigma_l \big(x^\delta_i(t) \big)\circ \d W^l_t, \quad i=1,\cdots, N,
  \end{equation}
with initial condition $X_0= (x_1(0), \ldots, x_N(0))\in (\T^2)^N$. Indeed, the above system of SDEs generate a stochastic flow $\big\{X^\delta_t \big\}_{t\geq 0}$ of diffeomorphisms on $(\T^2)^N$, see \cite[Section 4.7]{Kunita}. By Remark \ref{1-rem}(b), the equations can be equally written in the It\^o form.

Noticing that the vector fields $K^\delta$ and $\sigma_l$ are divergence free on $\T^2$, we have the following simple result.

\begin{lemma}\label{3-lem-1}
A.s., for any $t>0$, the mapping $X^\delta_t: (\T^2)^N \ni X_0 \mapsto X^\delta_t(X_0)\in (\T^2)^N$ preserves the Lebesgue measure on $(\T^2)^N$: for any integrable function $f:(\T^2)^N \to \R$,
  $$\int_{(\T^2)^N} f\big( X^\delta_t(X_0) \big)\,\d X_0 = \int_{(\T^2)^N} f(Y)\,\d Y.$$
\end{lemma}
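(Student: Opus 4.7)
My plan is to reduce the claim to the standard fact that a stochastic flow generated by divergence free coefficients preserves the Lebesgue measure pathwise, by checking that all coefficients of \eqref{regular-vortex-model}, viewed as an SDE on the product manifold $(\T^2)^N$, are divergence free.

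First I would rewrite \eqref{regular-vortex-model} in compact form as
\begin{equation*}
dX^\delta_t = b(X^\delta_t)\,dt + \sum_{l=1}^\infty A_l(X^\delta_t)\circ dW^l_t
\end{equation*}
on $(\T^2)^N$, where $b(X)=(b_1(X),\ldots,b_N(X))$ with $b_i(X)=\sum_{j\neq i}\xi_j K^\delta(x_i-x_j)$, and $A_l(X)=(\sigma_l(x_1),\ldots,\sigma_l(x_N))$ as in the introduction. For the noise fields, hypothesis \textbf{(H1)} gives $\operatorname{div}_{(\T^2)^N}A_l(X)=\sum_{i=1}^N \operatorname{div}\sigma_l(x_i)=0$ at every $X$. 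For the drift, recall that $K^\delta=\nabla^\perp G_\eps^\delta$ is divergence free on $\T^2$ (as the perpendicular gradient of a smooth function), hence
\begin{equation*}
\operatorname{div}_{(\T^2)^N} b(X)=\sum_{i=1}^N\nabla_{x_i}\!\cdot b_i(X)=\sum_{i=1}^N\sum_{j\neq i}\xi_j\,(\operatorname{div} K^\delta)(x_i-x_j)=0.
\end{equation*}

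Next I would invoke Kunita's formula for the Jacobian of a stochastic flow (the smoothness, global Lipschitz and summability assumptions on the $\sigma_l$ collected in Remark \ref{1-rem}(a), together with the smoothness of $K^\delta$, ensure that $X^\delta_t$ is indeed a flow of $C^1$-diffeomorphisms on $(\T^2)^N$, as already noted by the authors). That formula says the Jacobian $J^\delta_t(X_0)=\det(DX^\delta_t(X_0))$ satisfies the Stratonovich SDE
\begin{equation*}
dJ^\delta_t = J^\delta_t\Bigl[\operatorname{div}(b)(X^\delta_t)\,dt + \sum_{l=1}^\infty \operatorname{div}(A_l)(X^\delta_t)\circ dW^l_t\Bigr],\qquad J^\delta_0=1.
\end{equation*}
Since every divergence in the square bracket vanishes by the previous step, we conclude a.s. $J^\delta_t\equiv 1$ for all $t\geq 0$. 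The asserted identity then follows from the usual change of variables formula on $(\T^2)^N$ applied to $Y=X^\delta_t(X_0)$.

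The only real difficulty is technical rather than conceptual: I need Kunita's Jacobian formula to hold in our infinite-noise setting. I would handle this either by truncating the series $\sum_l \sigma_l\circ dW^l_t$ to a finite number $L$ of modes (where Kunita's result from \cite[Section 4.7]{Kunita} applies verbatim, so the truncated flow has $J^{\delta,L}_t\equiv 1$), and then passing to the limit $L\to\infty$ using the stability of stochastic flows of diffeomorphisms under convergence of coefficients; or by directly citing an infinite-noise version of the formula. In either case the summability $\sum_l \|\nabla\sigma_l\|_\infty^2<\infty$ noted in Remark \ref{1-rem}(a) is exactly what is needed to make the limit argument work.
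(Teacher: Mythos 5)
Your proposal is correct and is exactly the argument the paper has in mind: the authors state the lemma as a "simple result" following from the observation that $K^\delta$ and the $\sigma_l$ are divergence free, which is precisely your reduction to Kunita's Jacobian formula for a divergence-free Stratonovich SDE on $(\T^2)^N$. Your treatment of the infinite-noise technicality via truncation and the summability from Remark \ref{1-rem}(a) fills in a detail the paper leaves implicit, but the route is the same.
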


Let $g^\delta: (\T^2)^N \to \R$ be an auxiliary function defined as
  $$g^\delta(X)= -\sum_{1\leq i\neq j \leq N} \big( G^\delta(x_i-x_j) -c_0\big), \quad X=(x_1,\ldots, x_N)\in (\T^2)^N,$$
where $c_0>0$ is a constant independent of $\delta$ such that $G^\delta(x) \leq c_0,\, x\in \T^2$. We prove the following key estimate.

\begin{proposition}\label{3-prop-2}
Let $X^\delta_t$ be the flow on $(\T^2)^N$ associated to \eqref{regular-vortex-model}. Then there are constants $C_1, C_2>0$ such that
  $$\E\bigg[ \sup_{t\in [0,T]} g^\delta\big( X^\delta_t(X_0) \big) \bigg] \leq g^\delta(X_0) + C_1 \int_0^T \E\, h_1\big( X^\delta_t(X_0) \big) \,\d t + C_2 \bigg[\int_0^T \E\, h_2\big( X^\delta_t(X_0) \big) \,\d t \bigg]^{1/2}, $$
where $h_1, h_2: (\T^2)^N \to \R_+$ are two integrable functions defined as
  $$\aligned
  h_1(X) &= \sum_{i,j,k=1,\, i\neq j, i\neq k, j\neq k}^N \frac1{| x_i -x_j|^{2-\eps} | x_i -x_k |^{2-\eps}} + \sum_{1\leq i\neq j \leq N} \frac1{| x_i -x_j |^{1-\eps}}, \\
  h_2(X) &= \sum_{1\leq i\neq j\leq N} \frac1{| x_i -x_j|^{2-2\eps}}, \qquad X=(x_1, \ldots, x_N)\in (\T^2)^N.
  \endaligned$$
\end{proposition}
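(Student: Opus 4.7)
The plan is to apply It\^o's formula to $t \mapsto g^\delta(X_t^\delta)$. By Remark \ref{1-rem}(b) the Stratonovich system \eqref{regular-vortex-model} coincides with its It\^o counterpart, so writing $\d X_t^\delta = b(X_t^\delta)\,\d t + \sum_l A_l(X_t^\delta)\,\d W_t^l$ one gets
\begin{equation*}
g^\delta(X_t^\delta) - g^\delta(X_0) = \int_0^t (\nabla g^\delta \cdot b)(X_s^\delta)\,\d s + \frac{1}{2}\int_0^t \sum_l \<\mathrm{Hess}\,g^\delta \cdot A_l, A_l\>(X_s^\delta)\,\d s + M_t,
\end{equation*}
with $M_t = \sum_l \int_0^t \nabla g^\delta(X_s^\delta) \cdot A_l(X_s^\delta)\,\d W_s^l$. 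A direct computation gives $\nabla_{x_i} g^\delta = -2\sum_{j \neq i}\nabla G^\delta(x_i-x_j)$, so the task reduces to controlling the drift, the correction, and $M_t$ separately, then taking suprema and expectations.

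For the drift, pair $\nabla_{x_i} g^\delta$ with $b_i = \sum_{k\neq i}\xi_k \nabla^\perp G^\delta(x_i-x_k)$. The contributions with $k=j$ vanish identically because $\nabla G^\delta(x_i-x_j) \cdot \nabla^\perp G^\delta(x_i-x_j) = 0$; the remaining sum ranges over mutually distinct triples $(i,j,k)$, each term being bounded via \eqref{property-regular-Green} by $C/(|x_i-x_j|^{2-\eps}|x_i-x_k|^{2-\eps})$. This yields exactly the first term of $h_1$.

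The It\^o correction is the delicate step. A direct expansion gives integrand $-\sum_{i\neq j,\alpha,\beta}\partial_\alpha\partial_\beta G^\delta(x_i-x_j)\,[Q^{\alpha\beta}-Q^{\alpha\beta}(x_i,x_j)]$, where $Q^{\alpha\beta}(x,y) = \sum_l \sigma_l^\alpha(x)\sigma_l^\beta(y)$. A crude estimate using $|Q^{\alpha\beta}-Q^{\alpha\beta}(x_i,x_j)| \leq C|x_i-x_j|$ together with $|\mathrm{Hess}\,G^\delta| \leq C/|x|^{3-\eps}$ would leave the singularity $1/|x_i-x_j|^{2-\eps}$, strictly worse than what $h_1$ allows. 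The remedy is to symmetrize under $i \leftrightarrow j$, exploiting evenness of $G^\delta$ (hence of $\partial_\alpha\partial_\beta G^\delta$), which rewrites the integrand as
\begin{equation*}
-\frac{1}{2}\sum_{i\neq j}\sum_l [\sigma_l(x_i)-\sigma_l(x_j)]^T \, \mathrm{Hess}\,G^\delta(x_i-x_j)\,[\sigma_l(x_i)-\sigma_l(x_j)].
\end{equation*}
Combining $|\mathrm{Hess}\,G^\delta(x)| \leq C/|x|^{3-\eps}$ with the Lipschitz bound $\sum_l|\sigma_l(x_i)-\sigma_l(x_j)|^2 \leq C|x_i-x_j|^2$ from Remark \ref{1-rem}(a) then produces the second term of $h_1$.

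For the martingale $M_t$, the analogous $i \leftrightarrow j$ symmetrization (now using oddness of $\nabla G^\delta$) gives $\nabla g^\delta \cdot A_l = -\sum_{i\neq j}\nabla G^\delta(x_i-x_j)\cdot[\sigma_l(x_i)-\sigma_l(x_j)]$. Cauchy--Schwarz over the finitely many pairs $(i,j)$ followed by the same Lipschitz bound yields $\sum_l|\nabla g^\delta \cdot A_l|^2 \leq Ch_2(X)$, so $\<M\>_T \leq C\int_0^T h_2(X_s^\delta)\,\d s$. The Burkholder--Davis--Gundy inequality and Jensen's inequality $\E\sqrt{Y} \leq \sqrt{\E Y}$ then produce the square-root term in the statement. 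The main obstacle throughout is the It\^o correction: only the symmetrization that converts $Q^{\alpha\beta}-Q^{\alpha\beta}(x_i,x_j)$ into a square of the vector increment delivers the exponent $1-\eps$ required by $h_1$.
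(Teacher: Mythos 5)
Your proposal is correct and follows essentially the same route as the paper: the cancellation $\nabla G^\delta\cdot\nabla^\perp G^\delta=0$ for the self-interaction, the three-body bound for the remaining drift, the bound $|x|^2\cdot|x|^{-(3-\eps)}=|x|^{-(1-\eps)}$ for the It\^o correction, and BDG plus Jensen for the martingale. The only (cosmetic) difference is that the paper applies It\^o's formula to each $G^\delta(x_i-x_j)$ and computes the quadratic variation of $x_i-x_j$ directly, which yields the difference form $\sigma_l(x_i)-\sigma_l(x_j)$ immediately, whereas you recover it from the Hessian of $g^\delta$ by the $i\leftrightarrow j$ symmetrization (which implicitly uses that $G^\delta$ is even, a harmless normalization).
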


\begin{proof}
We follow the idea of the proof of \cite[Lemma 4]{FGP}, see also \cite[Lemma 9]{FS}. In the following we write $x^\delta_i(t)$ for the $i$-th component of $X^\delta_t(X_0)$, $i=1,\ldots, N$. By the It\^o formula,
  \begin{equation} \label{3-prop-2.1}
  \aligned
  \d G^\delta \big(x^\delta_i(t)- x^\delta_j(t)\big)=&\, \nabla G^\delta \big(x^\delta_i(t)- x^\delta_j(t)\big) \cdot \d \big(x^\delta_i(t)- x^\delta_j(t)\big) \\
  & + \frac12 \sum_{\alpha,\beta=1}^2 \partial_{\alpha,\beta} G^\delta \big(x^\delta_i(t)- x^\delta_j(t)\big)\, \d \Big[ \big(x^\delta_i- x^\delta_j\big)_\alpha, \big(x^\delta_i- x^\delta_j\big)_\beta \Big]_t\, .
  \endaligned
  \end{equation}
Using the equation \eqref{regular-vortex-model}, we have
  $$\aligned
  \d \big(x^\delta_i(t)- x^\delta_j(t)\big) =&\, \sum_{k\neq i} \xi_k K^\delta \big(x^\delta_i(t) -x^\delta_k(t) \big)\,\d t - \sum_{k\neq j} \xi_k K^\delta \big(x^\delta_j(t) -x^\delta_k(t) \big)\,\d t \\
  &+ \sum_{l=1}^\infty \big[\sigma_l \big(x^\delta_i(t) \big) - \sigma_l \big(x^\delta_j(t) \big) \big]\, \d W^l_t
  \endaligned $$
and hence
  $$\d \Big[ \big(x^\delta_i- x^\delta_j\big)_\alpha, \big(x^\delta_i- x^\delta_j\big)_\beta \Big]_t = \sum_{l=1}^\infty \big[\sigma_l^\alpha \big(x^\delta_i(t) \big) - \sigma_l^\alpha \big(x^\delta_j(t) \big) \big] \big[\sigma_l^\beta \big(x^\delta_i(t) \big) - \sigma_l^\beta \big(x^\delta_j(t) \big) \big]\,\d t.$$
Substituting these equations into \eqref{3-prop-2.1} and by the definition of $g^\delta$, we arrive at
  \begin{equation} \label{3-prop-2.2}
  g^\delta\big( X^\delta_t(X_0) \big) = g^\delta(X_0) - \sum_{1\leq i\neq j \leq N} \big[ I^{ij}_1(t) + I^{ij}_2(t) +I^{ij}_3(t) +I^{ij}_4(t) \big],
  \end{equation}
where
  $$\aligned
  I^{ij}_1(t)=&\, \sum_{k\neq i} \xi_k \int_0^t \nabla G^\delta \big(x^\delta_i(t)- x^\delta_j(t)\big) \cdot K^\delta \big(x^\delta_i(t) -x^\delta_k(t) \big)\,\d t, \\
  I^{ij}_2(t)=&\, - \sum_{k\neq j} \xi_k \int_0^t \nabla G^\delta \big(x^\delta_i(t)- x^\delta_j(t)\big) \cdot K^\delta \big(x^\delta_j(t) -x^\delta_k(t) \big)\,\d t, \\
  I^{ij}_3(t)=&\, \sum_{l=1}^\infty \int_0^t \nabla G^\delta \big(x^\delta_i(t)- x^\delta_j(t)\big) \cdot \big[\sigma_l \big(x^\delta_i(t) \big) - \sigma_l \big(x^\delta_j(t) \big) \big]\, \d W^l_t, \\
  I^{ij}_4(t)=&\, \frac12 \sum_{\alpha,\beta=1}^2 \sum_{l=1}^\infty \int_0^t \partial_{\alpha,\beta} G^\delta \big(x^\delta_i(t)- x^\delta_j(t)\big) \big[\sigma_l^\alpha \big(x^\delta_i(t) \big) - \sigma_l^\alpha \big(x^\delta_j(t) \big) \big] \\
  &\hskip70pt \times \big[\sigma_l^\beta \big(x^\delta_i(t) \big) - \sigma_l^\beta \big(x^\delta_j(t) \big) \big]\,\d t.
  \endaligned$$

We estimate the four terms one by one. First, by the definition \eqref{regular-kernel} of the kernel $K^\delta$,
  $$\nabla G^\delta \big(x^\delta_i(t)- x^\delta_j(t)\big) \cdot K^\delta \big(x^\delta_i(t) -x^\delta_j(t) \big) =0,$$
which gives us the key cancellation since this term is the most singular one: by the properties of $G^\delta$, it behaves like
  $$\frac1{\big| x^\delta_i(t) -x^\delta_j(t) \big|^{4-2\eps}}$$
for small $\big| x^\delta_i(t) -x^\delta_j(t) \big|$; while the other terms behave like
  $$\frac1{\big| x^\delta_i(t) -x^\delta_j(t) \big|^{2-\eps}} \frac1{\big| x^\delta_i(t) -x^\delta_k(t) \big|^{2-\eps}}$$
with $j\neq k$. Hence we obtain
  $$\big|I^{ij}_1(t) \big| \leq C \sum_{k\neq i,j} \int_0^t \frac1{\big| x^\delta_i(s) -x^\delta_j(s) \big|^{2-\eps}} \frac1{\big| x^\delta_i(s) -x^\delta_k(s) \big|^{2-\eps}} \,\d s. $$
In the same way,
  $$\big|I^{ij}_2(t) \big| \leq C \sum_{k\neq i,j} \int_0^t \frac1{\big| x^\delta_i(s) -x^\delta_j(s) \big|^{2-\eps}} \frac1{\big| x^\delta_j(s) -x^\delta_k(s) \big|^{2-\eps}} \,\d s. $$
Then by the definition of $h_1(X)$,
  \begin{equation}\label{3-prop-2.3}
  \sum_{1\leq i\neq j \leq N} \big( \big|I^{ij}_1(t) \big| + \big|I^{ij}_2(t) \big|\big) \leq C\int_0^t h_1\big( X^\delta_s(X_0) \big)\,\d s.
  \end{equation}

Next, recalling the definition of $I^{ij}_3(t)$ and by Burkholder--Davis--Gundy's inequality,
  $$\aligned
  \E\bigg[ \sup_{t\in [0,T]} \big|I^{ij}_3(t)\big| \bigg] & \leq C \E\Big(\big[I^{ij}_3, I^{ij}_3\big]_T^{1/2} \Big) \\
  &\leq C \bigg[\sum_{l=1}^\infty \E \int_0^T \Big(\nabla G^\delta \big(x^\delta_i(t)- x^\delta_j(t)\big) \cdot \big[\sigma_l \big(x^\delta_i(t) \big) - \sigma_l \big(x^\delta_j(t) \big) \big] \Big)^2 \,\d t \bigg]^{1/2}.
  \endaligned$$
Using the Lipschitz estimate of $\{\sigma_l\}_{l\geq 1}$ (see Remark \ref{1-rem}(a)) and the regularity properties \eqref{property-regular-Green} of $G^\delta$, we get
  $$\E\bigg[ \sup_{t\in [0,T]} \big|I^{ij}_3(t)\big| \bigg] \leq C \bigg[ \E \int_0^T \frac1{\big| x^\delta_i(s) -x^\delta_j(s) \big|^{2-2\eps}} \,\d s \bigg]^{1/2},$$
Therefore,
  \begin{equation}\label{3-prop-2.4}
  \sum_{1\leq i\neq j \leq N} \E \bigg[ \sup_{t\in [0,T]} \big|I^{ij}_3(t)\big| \bigg] \leq C_N  \bigg[ \E \int_0^T h_2\big( X^\delta_s(X_0) \big) \,\d s \bigg]^{1/2}.
  \end{equation}

Finally, similarly to the arguments in the last step,
  $$\aligned
  \big|I^{ij}_4(t)\big| &\leq C\sum_{\alpha,\beta=1}^2 \int_0^t \big| \partial_{\alpha,\beta} G^\delta \big(x^\delta_i(s)- x^\delta_j(s)\big) \big| \cdot \big| x^\delta_i(s) -x^\delta_j(s) \big|^2\,\d s \\
  &\leq C \int_0^t \frac1{\big| x^\delta_i(s) -x^\delta_j(s) \big|^{1-\eps}}\,\d s,
  \endaligned $$
which implies
  $$\sum_{1\leq i\neq j \leq N} \big|I^{ij}_4(t)\big| \leq C \int_0^t h_1\big( X^\delta_s(X_0) \big)\,\d s.$$
Combining this estimate with \eqref{3-prop-2.2}--\eqref{3-prop-2.4}, we complete the proof.
\end{proof}

As a consequence of Proposition \ref{3-prop-2}, we have
  $$\aligned
  \int_{(\T^2)^N} \E\bigg[ \sup_{t\in [0,T]} g^\delta\big( X^\delta_t(X_0) \big) \bigg] \d X_0  \leq &\, \int_{(\T^2)^N} g^\delta(X_0)\,\d X_0 + C_1 \int_0^T \E\int_{(\T^2)^N} h_1\big( X^\delta_t(X_0) \big) \,\d X_0\d t \\
  &\, + C_2\bigg[\int_0^T \E\int_{(\T^2)^N} h_2\big( X^\delta_t(X_0) \big) \,\d X_0\d t \bigg]^{1/2},
  \endaligned $$
where we have used Cauchy's inequality for the last term, since $\d X_0$ is a probability measure on $(\T^2)^N$. Using Lemma \ref{3-lem-1} and integrability of the functions $g^\delta$, $h_1$ and $h_2$, we arrive at
  \begin{equation}\label{key-estimate}
  \aligned
  \int_{(\T^2)^N} \E\bigg[ \sup_{t\in [0,T]} g^\delta\big( X^\delta_t(X_0) \big) \bigg] \d X_0 \leq &\, \int_{(\T^2)^N} g^\delta(X_0)\,\d X_0 + C_1 \int_0^T \E\int_{(\T^2)^N} h_1( Y) \,\d Y\d t \\
  &\, + C_2\bigg[\int_0^T \E\int_{(\T^2)^N} h_2( Y) \,\d Y\d t \bigg]^{1/2} \\
  \leq &\ C<+\infty,
  \endaligned
  \end{equation}
where the constant $C$ depends only on $\eps\in (0,1)$ and $N\in \N$.

In the following we write $x^\delta_i(t|X_0)\ (1\leq i \leq N)$ for the components of $X^\delta_t(X_0)$ in order to emphasize the dependence on the initial configuration $X_0\in (\T^2)^N$. Now we can prove

\begin{corollary}\label{3-cor-3}
There is a constant $C>0$ such that for any $\delta\in (0,1)$, it holds
  $$\big( {\rm Leb}_{(\T^2)^N} \otimes \P \big)\bigg(\min_{1\leq i\neq j\leq N} \inf_{t\in [0,T]} \big| x^\delta_i(t|X_0) -x^\delta_j(t|X_0) \big| \leq \delta \bigg) \leq C \delta^{1-\eps} .$$
\end{corollary}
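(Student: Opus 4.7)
The plan is to apply a Chebyshev-type argument to the uniform bound \eqref{key-estimate}: I would show that on the event in question, the quantity $\sup_{t\in[0,T]} g^\delta(X^\delta_t(X_0))$ is forced to be at least of order $\delta^{-(1-\eps)}$, whence \eqref{key-estimate} immediately bounds the product measure of the event by $C\delta^{1-\eps}$.

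First I would dispose of the initial configurations with $\min_{i\neq j}|x_i(0)-x_j(0)| \leq \delta$: these form a subset of $(\T^2)^N$ of Lebesgue measure at most $C_N\delta^2$, which for $\delta\in(0,1)$ is dominated by $\delta^{1-\eps}$ and can therefore be absorbed in the final bound. For the remaining initial data with $\min_{i\neq j}|x_i(0)-x_j(0)|>\delta$, suppose the event holds. Then, by continuity of the paths $t\mapsto x^\delta_i(t|X_0)$ and the intermediate value theorem applied to the distance of a pair that achieves the infimum, there exist indices $i\neq j$ and a first time $\tau\in(0,T]$ with $|x^\delta_i(\tau|X_0)-x^\delta_j(\tau|X_0)|=\delta$.

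At that moment $G^\delta(x^\delta_i(\tau)-x^\delta_j(\tau)) = G(x^\delta_i(\tau)-x^\delta_j(\tau))$ because $G^\delta$ agrees with $G$ on $\{|x|\geq\delta\}$, and the short-distance asymptotic $G(x)\sim -|x|^{-(1-\eps)}$ recalled in Section \ref{sect-kernel} yields $-G(\delta)\geq c_1\delta^{-(1-\eps)}$ for some $c_1>0$ and all sufficiently small $\delta$. Since every summand $c_0-G^\delta(x_a-x_b)$ in the definition of $g^\delta$ is non-negative (recall $G^\delta \leq c_0$), keeping only the pair $(i,j)$ gives
\[
g^\delta\big(X^\delta_\tau(X_0)\big) \geq c_0-G(\delta) \geq c_2\delta^{-(1-\eps)}.
\]
Hence $\sup_{t\in[0,T]}g^\delta(X^\delta_t(X_0))\geq c_2\delta^{-(1-\eps)}$ on the intersection of the event with this second set of initial conditions.

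Markov's inequality applied to the probability measure $\Leb_{(\T^2)^N}\otimes\P$ together with the uniform bound \eqref{key-estimate} then bounds the product measure of this intersection by $C\delta^{1-\eps}$, and combining with the $O(\delta^2)$ contribution from the first case concludes the argument. The only point requiring genuine verification is the uniform lower bound $-G(\delta)\geq c_1\delta^{-(1-\eps)}$, which follows by isolating the $n=0$ term in the periodic series defining $G$ and estimating the smooth remainder; every other step is routine.
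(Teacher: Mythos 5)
Your argument is correct and is essentially the paper's proof: both lower-bound $\sup_{t\in[0,T]} g^\delta\big(X^\delta_t(X_0)\big)$ by $c\,\delta^{-(1-\eps)}$ on the event in question and then conclude via Chebyshev/Markov combined with the uniform estimate \eqref{key-estimate}. Your extra step---discarding the $O(\delta^2)$-measure set of initial data already within distance $\delta$ and using the intermediate value theorem to land exactly on $|x|=\delta$, where $G^\delta=G$ and the asymptotics of $G$ apply directly---is a harmless (in fact slightly more careful) refinement of the paper's direct assertion that $-G^\delta(x)\geq C\delta^{-(1-\eps)}$ for all $|x|\leq\delta$.
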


\begin{proof}
Note that, by definition,
  $$g^\delta\big( X^\delta_t(X_0) \big) = -\sum_{1\leq i\neq j\leq N} \big[ G^\delta \big(x^\delta_i(t|X_0) -x^\delta_j(t|X_0) \big) - c_0 \big].$$
Recall that $c_0\geq 0$ is such that $G^\delta(x) \leq c_0$ for all $x\in \T^2$. For $\delta>0$ small enough, if
  $$\min_{1\leq i\neq j\leq N} \inf_{t\in [0,T]} \big| x^\delta_i(t|X_0) -x^\delta_j(t|X_0) \big| \leq \delta,$$
then there exist $t_0\in [0,T]$ and $i_0,j_0$ such that $\big| x^\delta_{i_0}(t_0|X_0) -x^\delta_{j_0}(t_0|X_0) \big| \leq \delta$. Therefore, by the definition of $G^\delta$,
  $$g^\delta\big( X^\delta_{t_0}(X_0) \big) \geq - \big[ G^\delta \big( x^\delta_{i_0}(t_0|X_0) -x^\delta_{j_0}(t_0|X_0) \big) - c_0 \big] \geq c_0 + \frac{C}{\delta^{1-\eps}},$$
which implies
  $$\sup_{t\in [0,T]} g^\delta\big( X^\delta_t(X_0) \big) \geq \frac{C}{\delta^{1-\eps}}.$$
Combining this result with Chebyshev's inequality and \eqref{key-estimate}, we get
  \[\aligned
  &\ \big( {\rm Leb}_{(\T^2)^N} \otimes \P \big)\bigg(\min_{1\leq i\neq j\leq N} \inf_{t\in [0,T]} \big| x^\delta_i(t|X_0) -x^\delta_j(t|X_0) \big| \leq \delta \bigg) \\
  \leq &\ \big( {\rm Leb}_{(\T^2)^N} \otimes \P \big) \bigg( \sup_{t\in [0,T]} g^\delta\big( X^\delta_t(X_0) \big) \geq \frac{C}{\delta^{1-\eps}} \bigg) \\
  \leq &\ C^{-1} \delta^{1-\eps} \int_{(\T^2)^N} \E\bigg[ \sup_{t\in [0,T]} g^\delta\big( X^\delta_t(X_0) \big) \bigg] \d X_0 \leq C' \delta^{1-\eps}.
  \endaligned \]
The proof is complete.
\end{proof}

Now we follow the arguments at the end of \cite[Section 3]{FGP}. Recall the definition of the generalized diagonal $\Delta_N$ of $(\T^2)^N$. For any initial configuration $X_0\in \Delta_N^c$ (the complement of $\Delta_N$ in $(\T^2)^N$), the stochastic point vortex system \eqref{stoch-vortex-model} makes sense until the solution $X_t(X_0) = (x_1(t), \ldots, x_N(t))$ reaches $\Delta_N$. Let $\Delta_N^\delta$ be the $\delta$-neighborhood of $\Delta_N$ in $(\T^2)^N$. For $X_0\in \big(\Delta_N^\delta \big)^c$, define the stopping time
  $$\tau_{X_0}^\delta(\omega)= \inf\big\{t>0: X^\delta_t(X_0,\omega) \in \Delta_N^\delta \big\},$$
with the convention that $\inf\emptyset = +\infty$. The continuity of trajectories implies $\P \big( \tau_{X_0}^\delta>0 \big)=1$. Moreover, on the random interval $\big[0, \tau_{X_0}^\delta \big]$, the solution $X^\delta_t(X_0)$ coincides with the unique solution  $X_t$ of \eqref{stoch-vortex-model}, which implies $\tau_{X_0}^\delta$ is also the first instant that $X_t$ enter $\Delta_N^\delta$. We define
  \begin{equation}\label{stopping-time}
  \tau_{X_0} = \sup_{\delta\in (0,1)} \tau_{X_0}^\delta.
  \end{equation}
Then the unique solution $X_t$ of the system \eqref{stoch-vortex-model} is well defined on the interval $[0, \tau_{X_0} )$.

\begin{proposition}\label{3-prop-4}
For ${\rm Leb}_{(\T^2)^N}$-a.e. $X_0\in (\T^2)^N$, the stochastic point vortex system \eqref{stoch-vortex-model} has a globally defined unique strong solution.
\end{proposition}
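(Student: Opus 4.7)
The plan is to convert the quantitative diagonal-avoidance estimate of Corollary \ref{3-cor-3} into almost-sure global existence by taking a monotone limit in the regularization parameter $\delta$, and then to deduce uniqueness from the fact that the original and regularized systems share the same coefficients whenever all pairwise distances exceed $\delta$.

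First I would recast Corollary \ref{3-cor-3} in terms of the stopping times: the event inside the probability on its left-hand side is exactly $\big\{\tau_{X_0}^\delta \leq T\big\}$, so by Fubini
$$\int_{(\T^2)^N} \P\big(\tau_{X_0}^\delta \leq T\big)\,\d X_0 \leq C\delta^{1-\eps}.$$
Since $\Delta_N^{\delta_1} \subset \Delta_N^{\delta_2}$ for $\delta_1<\delta_2$, the stopping times $\tau_{X_0}^\delta$ are non-increasing in $\delta$; hence $\tau_{X_0}=\sup_{\delta\in(0,1)}\tau_{X_0}^\delta=\lim_{\delta\to 0^+}\tau_{X_0}^\delta$ and the events $\big\{\tau_{X_0}^\delta\leq T\big\}$ decrease to $\{\tau_{X_0}\leq T\}$ as $\delta\to 0^+$. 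Continuity of $\P$ from above together with dominated convergence in $X_0$ (with dominating function $1$) then yields
$$\int_{(\T^2)^N} \P\big(\tau_{X_0}\leq T\big)\,\d X_0 \leq \lim_{\delta\to 0^+} C\delta^{1-\eps}=0,$$
so $\P(\tau_{X_0}\leq T)=0$ for $\Leb_{(\T^2)^N}$-a.e.\ $X_0$. Taking a countable intersection of the resulting null sets over $T=n\in\N$ produces a single Lebesgue-null exceptional set outside of which $\P(\tau_{X_0}=+\infty)=1$, which gives global existence for a.e.\ initial configuration.

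For uniqueness I would exploit that $K_\eps^\delta=K_\eps$ off the $\delta$-ball around the origin, so the drifts of \eqref{regular-vortex-model} and \eqref{stoch-vortex-model} coincide on $(\Delta_N^\delta)^c$; pathwise uniqueness for the regularized SDE (whose coefficients are smooth and globally Lipschitz, as used already in Section \ref{sect-example}) then forces any strong solution of \eqref{stoch-vortex-model} with initial datum in $(\Delta_N^\delta)^c$ to coincide with $X_t^\delta$ on $[0,\tau_{X_0}^\delta]$. Letting $\delta\to 0^+$ extends this equality to $[0,\tau_{X_0})$, completing uniqueness. I do not foresee any serious obstacle in this final step, since the substantive analytic work, namely the It\^o expansion with the key cancellation of the most singular term $\nabla G^\delta\cdot K^\delta$ and the Lebesgue-preservation estimate \eqref{key-estimate}, has already been carried out in Proposition \ref{3-prop-2} and Corollary \ref{3-cor-3}; what remains here is essentially a routine combination of Fubini, a monotone limit in $\delta$, and standard SDE uniqueness.
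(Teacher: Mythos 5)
Your argument is correct and follows essentially the same route as the paper: both rest on integrating the bound of Corollary \ref{3-cor-3} over the initial data and sending $\delta\to 0$, using that $\tau_{X_0}=\sup_{\delta}\tau^{\delta}_{X_0}$ and that $X^{\delta}$ coincides with the true solution up to $\tau^{\delta}_{X_0}$. The only difference is cosmetic: you pass to the limit via monotonicity of the events $\big\{\tau^{\delta}_{X_0}\leq T\big\}$ and continuity of the measure from above, whereas the paper applies the Borel--Cantelli lemma along a sequence $\delta_k$ with $\sum_k \delta_k^{1-\eps}<\infty$; both produce the same a.e.\ statement after intersecting the null sets over $T=n\in\N$.
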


\begin{proof}
It is sufficient to show that $\P(\tau_{X_0} = \infty)=1$ holds for ${\rm Leb}_{(\T^2)^N}$-a.e. $X_0\in (\T^2)^N$. To this end, for any given $T>0$ and $\delta_0\in (0,1)$, we prove that $\P(\tau_{X_0} \geq T)=1$ for ${\rm Leb}_{(\T^2)^N}$-a.e. $X_0\in \big(\Delta_N^{\delta_0} \big)^c$.

By Corollary \ref{3-cor-3}, for any $\delta\in (0, \delta_0)$,
  $$\big( {\rm Leb}_{(\T^2)^N} \otimes \P \big)\bigg(\min_{1\leq i\neq j\leq N} \inf_{t\in [0,T]} \big| x^\delta_i(t|X_0) -x^\delta_j(t|X_0) \big| \leq \delta \bigg) \leq C \delta^{1-\eps} .$$
Take any sequence $\{\delta_k\}_{k\geq 1} \subset (0, \delta_0)$ which tends to 0 fast enough such that $\sum_{k=1}^\infty \delta_k^{1-\eps} <+\infty$. The Borel--Cantelli lemma yields the existence a $\big( {\rm Leb}_{(\T^2)^N} \otimes \P \big)$-negligible set $A \subset (\T^2)^N \times \Omega$, with the property that for all $(X_0, \omega)\in A^c$ there exists $k_0 = k_0(X_0, \omega) \geq 1$ such that for all $k\geq k_0$, one has
  $$\min_{1\leq i\neq j\leq N} \inf_{t\in [0,T]} \big| x^{\delta_k}_i(t|X_0) -x^{\delta_k}_j(t|X_0) \big| > \delta_k. $$
When restricted to $(X_0, \omega)\in A^c \cap \big(\Delta_N^{\delta_0} \big)^c \times \Omega$, the above assertion implies $\tau^{\delta_k}_{X_0} (\omega) \geq T$ for all $k\geq k_0$. Hence, $\tau_{X_0}(\omega) \geq T$ by the definition \eqref{stopping-time}. Since $A$ is $\big( {\rm Leb}_{(\T^2)^N} \otimes \P \big)$-negligible, the inequality $\tau_{X_0}(\omega) \geq T$ holds for a.e. $(X_0, \omega)\in \big(\Delta_N^{\delta_0} \big)^c \times \Omega$. By the Fubini theorem, there is a full measure set $B\subset \big(\Delta_N^{\delta_0} \big)^c$, i.e. ${\rm Leb}_{(\T^2)^N} \big(\big( \Delta_N^{\delta_0} \big)^c \setminus B\big) = 0$, such that for all $X_0\in B$ it holds $\P(\tau_{X_0} \geq T\big)=1$, which completes the proof.
\end{proof}

With the above preparations, finally we can prove the main result of this paper by following the ideas in the proof of \cite[Theorem 1, p.1457]{FGP}.

\begin{proof}[Proof of Theorem \ref{main-result}]
For any given $X_0\in \Delta_N^c$, the system \eqref{stoch-vortex-model} has a unique strong local solution on the random interval $\big[0, \tau_{X_0} \big)$. By Proposition \ref{3-prop-4}, for ${\rm Leb}_{(\T^2)^N}$-a.e. $X_0\in \Delta_N^c$, $\P(\tau_{X_0}= +\infty)=1$. We want to show that this property is true for all $X_0\in \Delta_N^c$. Let us add a point $\Lambda$ to $(\T^2)^N$ and, when $\tau_{X_0}<+\infty$, we set $X_t= X_t(X_0) = \Lambda$ for $t\geq \tau_{X_0}$. The process $X_t$ lives on $\Delta_N^c \cup \{\Lambda \}$ and it is Markovian. We have, for $\eta>0$ small enough,
  $$\P\big( X_{[\eta,T]}(X_0) \subset \Delta_N^c \big) = \int_{\Delta_N^c \cup \{\Lambda \}} \P\big( X_{[0,T-\eta]}(Y) \subset \Delta_N^c \big)\, \mu_{X_\eta (X_0)}(\d Y),$$
where $\big\{ X_{[\eta,T]}(X_0) \subset \Delta_N^c \big\} = \big\{\omega\in \Omega: X_t(X_0,\omega) \in \Delta_N^c \mbox{ for any } t\in [\eta, T] \big\}$, and $\mu_{X_\eta (X_0)}$ is the law of $X_\eta (X_0)$. Let $A\subset (\T^2)^N$ be a ${\rm Leb}_{ (\T^2)^N}$-negligible set such that for all $X_0\in A^c$, the Cauchy problem for \eqref{stoch-vortex-model} has a unique global solution. We have $\P\big( X_{[0,T-\eta]}(Y) \subset \Delta_N^c \big) =1$ for all $Y\in A^c$. Thus,
  $$\P\big( X_{[\eta,T]}(X_0) \subset \Delta_N^c \big) \geq \int_{A^c} \P\big( X_{[0,T-\eta]}(Y) \subset \Delta_N^c \big)\, \mu_{X_\eta (X_0)}(\d Y) = 1- \mu_{X_\eta (X_0)}(A).$$

Now for small $\delta_0>0$, assume $X_0\in \big(\Delta_N^{\delta_0} \big)^c$. Then for all $\delta\in (0, \delta_0)$,
  $$\aligned
  \mu_{X_\eta (X_0)}(A) &= \P(X_\eta (X_0) \in A) \\
  &= \P\big( X_\eta (X_0) \in A, \tau^\delta_{X_0}> \eta \big) + \P\big( X_\eta (X_0) \in A, \tau^\delta_{X_0}\leq \eta \big)\\
  &\leq \P\big( X^\delta_\eta (X_0) \in A \big) + \P\big( \tau^\delta_{X_0}\leq \eta \big)\\
  &= \P\big( \tau^\delta_{X_0}\leq \eta \big),
  \endaligned$$
where in the last step we have used the facts that $A$ is ${\rm Leb}_{(\T^2)^N}$-negligible and that the law of $X^\delta_\eta (X_0)$ is absolutely continuous with respect to ${\rm Leb}_{(\T^2)^N}$ for any $X_0\in \Delta_N^c,\, \delta>0$ and $\eta >0$. The second assertion follows from the hypothesis \textbf{(H2)} and Corollary \ref{cor-density}.

Next, the continuity of trajectories leads to $\lim_{\eta\to 0} \P\big( \tau^\delta_{X_0}\leq \eta \big) =0$; therefore,
  $$\lim_{\eta\to 0} \P\big( X_{[\eta,T]}(X_0) \subset \Delta_N^c \big) =1.$$
Note that the sequence of events $\big\{ X_{[1/n,T]}(X_0) \subset \Delta_N^c\big\}$ is decreasing in $n$, so is the probability $\P\big( X_{[1/n,T]}(X_0) \subset \Delta_N^c\big)$. As a result, for any $\eta>0$, we have $\P\big( X_{[\eta,T]}(X_0) \subset \Delta_N^c \big) =1$ and hence $\P\big( X_{[0,T]}(X_0) \subset \Delta_N^c \big) =1$.
\end{proof}

\section{An explicit blow-up result}

In this section we show that,  for the deterministic system \eqref{vortex-model}, one cannot go further than proving existence for almost every initial condition, unless all the intensities have the same sign. Hence, the result on the global existence of the stochastic system \eqref{stoch-vortex-model} for every initial condition shown above is indeed an example of regularization by noise.

To prove the occurrence of collapse we follow here the idea of Marchioro and Pulvirenti \cite[Section 4.6]{MP} and give explicit examples of initial conditions for the three point vortex system leading to a collapse. The readers can find in \cite[Section IV]{BB} detailed discussions on the collapse of vortex models of the SQG equations, i.e. the case with $\eps=0$ in \eqref{vortex-model}. To avoid technicalities we will work in the whole plane, that is, we consider
\begin{equation*}
  \frac{\d x_i(t)}{\d t}=  \sum_{j=1,j\neq i}^3 \xi_j K_\eps (x_i(t) -x_j(t)),\quad i=1,2,3,
  \end{equation*}
where $\xi_j\in \R\setminus \{0\}$ is the intensity of the vortex point $x_j(t)$, $j=1,2,3$, and
\begin{align*}
 K_\eps: \R^2\setminus\{0\}\to \R^2, \quad K_\eps(x)= c_\eps \frac{x^\perp}{\left|x\right|^{3-\eps}}.
\end{align*}

Let us consider the distance $l_{i,j}(t)=\left|x_i(t)- x_j(t)\right|$ of two different vortices. Then,
\begin{align*}
 \frac{\d}{\d t} l_{i,j}(t)^2 & =2 (x_i(t)-x_j(t))\cdot \left(\frac{\d x_i(t)}{\d t}-\frac{\d x_j(t)}{\d t} \right) \\
			       & = 2 (x_i(t)-x_j(t))\cdot \left(\sum_{k=1,k\neq i}^3 \xi_k K_\eps (x_i(t) -x_k(t))-\sum_{m=1,m\neq j}^3 \xi_m K_\eps (x_j(t) -x_m(t))\right)
\end{align*}
Using
\begin{align*}
(x_i(t)-x_j(t))\cdot \big( \xi_j K_\eps (x_i(t) -x_j(t))-\xi_i K_\eps (x_j(t) -x_i(t)) \big)=0
\end{align*}
we obtain, for $i\neq k\neq j$,
\begin{align*}
 \frac{\d}{\d t} l_{i,j}(t)^2 &= 2\xi_k (x_i(t)-x_j(t))\cdot \left(K_\eps(x_i(t) -x_k(t))- K_\eps (x_j(t) -x_k(t))\right) \\
 & =2c_\eps \xi_k (x_i(t)-x_j(t))\cdot \left(\frac{(x_i(t) -x_k(t))^\perp}{\left|x_i(t) -x_k(t)\right|^{3-\eps}}-\frac{(x_j(t) -x_k(t))^\perp}{\left|x_j(t) -x_k(t)\right|^{3-\eps}}\right) \\
 & = 2c_\eps \xi_k \left(\frac{-x_i(t)\cdot x_k(t)^\perp -x_j(t)\cdot x_i(t)^\perp + x_j(t)\cdot x_k(t)^{\perp}}{\left|x_i(t) -x_k(t)\right|^{3-\eps}}\right. \\
 &\qquad\qquad \qquad \left.-\frac{ x_i(t)\cdot x_j(t)^\perp -x_i(t)\cdot x_k(t)^\perp + x_j(t)\cdot x_k(t)^{\perp}}{\left|x_j(t) -x_k(t)\right|^{3-\eps}}\right).
\end{align*}
Note that $x_i(t)\cdot x_j(t)^\perp= -x_j(t)\cdot x_i(t)^\perp$, we obtain
\begin{align*}
 \frac{\d}{\d t} l_{i,j}(t)^2 &= 2c_\eps \xi_k \left(\frac{x_j(t)\cdot x_k(t)^{\perp}- x_i(t)\cdot x_k(t)^\perp -x_j(t)\cdot x_i(t)^\perp} {\left|x_i(t) -x_k(t)\right|^{3-\eps}}\right. \\
 &\qquad\qquad \qquad \left. -\frac{ x_j(t)\cdot x_k(t)^{\perp}-x_i(t)\cdot x_k(t)^\perp-x_j(t)\cdot x_i(t)^\perp}{\left|x_j(t) -x_k(t)\right|^{3-\eps}}\right).
\end{align*}
The area of the triangle spanned by the vectors $x_j(t)$ and $x_i(t)$ is given by $\frac12 |x_j(t)\cdot x_i(t)^\perp|$. Hence, $A(t):= x_j(t)\cdot x_k(t)^{\perp} -x_i(t)\cdot x_k(t)^\perp -x_j(t)\cdot x_i(t)^\perp$ is twice of the area of the triangle with endpoints $x_j(t)$, $x_i(t)$ and $x_k(t)$. The value of $A(t)$ is positive if and only if $x_j(t)$, $x_i(t)$ and $x_k(t)$ are ordered counter-clockwise. It follows
\begin{align*}
 \frac{\d}{\d t} l_{i,j}(t)^2= 2c_\eps \xi_k A(t) \left(\frac{1}{l_{i,k}(t)^{3-\eps}}-\frac{1}{l_{k,j}(t)^{3-\eps}}\right).
\end{align*}
Thus,
\begin{align*}
S:=\frac{ l_{1,2}(t)^2}{\xi_3 }+\frac{ l_{2,3}(t)^2}{\xi_1 }+\frac{ l_{3,1}(t)^2}{\xi_2 }
\end{align*}
is constant in time.

To obtain another invariant, we consider
\begin{align*}
\frac{\d}{\d t} l_{i,j}(t)^{-1+\eps}
& =(-1+\eps)l_{i,j}(t)^{-2+\eps}  \frac{\d}{\d t} l_{i,j}(t)= \frac12 (-1+\eps)l_{i,j}(t)^{-3+\eps} \frac{\d}{\d t} l_{i,j}(t)^2\\
&= (-1+\eps) c_\eps \xi_k A(t) \frac{1}{l_{i,j}(t)^{3-\eps}}\left(\frac{1}{l_{i,k}(t)^{3-\eps}}-\frac{1}{l_{k,j}(t)^{3-\eps}}\right).
\end{align*}
Hence,
\begin{align*}
 S_{\eps}:=\frac{ l_{1,2}(t)^{-1+\eps}}{\xi_3 }+\frac{ l_{2,3}(t)^{-1+\eps}}{\xi_1 }+\frac{ l_{3,1}(t)^{-1+\eps}}{\xi_2 }
\end{align*}
is also constant in time. We get
\begin{align*}
 \frac{\d}{\d t}\left( \frac{ l_{1,2}(t)^2 }{ l_{2,3}(t)^2 }  \right)
 & =\frac{ 2c_\eps A(t) }{l_{2,3}(t)^4 }
 \left( l_{2,3}(t)^2 \xi_3 \left(\frac{1}{l_{3,1}(t)^{3-\eps}}-\frac{1}{l_{2,3}(t)^{3-\eps}}\right)\right.\\
 &\qquad\qquad\qquad\qquad  \left.-l_{1,2}(t)^2\xi_1  \left(\frac{1}{l_{1,2}(t)^{3-\eps}}-\frac{1}{l_{3,1}(t)^{3-\eps}}\right) \right)\\
 &=\frac{ 2c_\eps A(t) }{l_{2,3}(t)^4 }
 \left( \frac{l_{2,3}(t)^2 \xi_3+l_{1,2}(t)^2\xi_1}{l_{3,1}(t)^{3-\eps}}-\frac{\xi_3 }{l_{2,3}(t)^{1-\eps}}-\frac{\xi_1}{l_{1,2}(t)^{1-\eps}}\right).
\end{align*}
From the definition of $S$, we have
\begin{equation}\label{eq-ratio}
\aligned
 \frac{\d}{\d t}\left( \frac{ l_{1,2}(t)^2 }{ l_{2,3}(t)^2 }  \right)  & =\frac{ 2c_\eps A(t) }{l_{2,3}(t)^4 }
 \left( \frac{\xi_1\xi_3 S}{l_{3,1}(t)^{3-\eps}}-\frac{\xi_3 }{l_{2,3}(t)^{1-\eps}}-\frac{\xi_1}{l_{1,2}(t)^{1-\eps}}-\frac{\xi_3\xi_1}{\xi_2}\frac{1}{l_{3,1}(t)^{1-\eps}}\right)\\
 & =\xi_1\xi_3 \frac{ 2c_\eps A(t) }{l_{2,3}(t)^4 }
 \left(S\, l_{3,1}(t)^{-3+\eps} -S_\eps \right) .
\endaligned
\end{equation}
Choosing initial positions such that $S=0$ (which is also a necessary condition for a collapse of the three vortices) and $S_\eps=0$, we get that the ratio  $\frac{ l_{1,2}(t)^2 }{ l_{2,3}(t)^2 }$ is  constant in time and similarly we deduce that $\frac{ l_{2,3}(t)^2 }{ l_{3,1}(t)^2 }$ and $\frac{ l_{3,1}(t)^2 }{ l_{1,2}(t)^2 }$ are constant in time. Note that $S_\eps=0$ is a condition on the intensities only in the case of the Euler equation, i.e., for $\eps=1$. Hence, the triangle $A(t)$ does not change its shape in time and $\frac{A(t)}{l_{i,j}(t)^2}$ is also independent of $t$. We can give its value in terms of the angle $\varphi_{i,j}$ between the vectors $x_j-x_k$ and $x_i-x_k$:
\begin{align*}
\frac{A(t)}{l_{i,j}(t)^2}=\frac12 \frac{1}{l_{i,j}(t)^2} l_{i,k}(t)l_{k,j}(t)\sin(\varphi_{i,j}) =\frac12 \frac{l_{i,k}(0)l_{k,j}(0)}{l_{i,j}(0)^2}\sin(\varphi_{i,j}).
\end{align*}
This yields now, with $l_{i,k}(t)=\frac{l_{i,k}(0)}{l_{i,j}(0)}l_{i,j}(t)$,
\begin{align*}
 \frac{\d}{\d t} l_{i,j}(t)^2
 & = 2c_\eps \xi_k A(t) \left(\frac{1}{l_{i,k}(t)^{3-\eps}}-\frac{1}{l_{k,j}(t)^{3-\eps}}\right)\\
 & = 2c_\eps \xi_k A(t) \left(\frac{l_{i,j}(0)^{3-\eps}}{l_{i,k}(0)^{3-\eps}}
 -\frac{l_{i,j}(0)^{3-\eps}}{l_{k,j}(0)^{3-\eps}}\right)\frac{1}{l_{i,j}(t)^{3-\eps}}\\
  & = c_\eps \xi_k l_{i,k}(0)l_{k,j}(0) l_{i,j}(0)^{1-\eps}\sin(\varphi_{i,j}) \left(\frac{1}{l_{i,k}(0)^{3-\eps}}
 -\frac{1}{l_{k,j}(0)^{3-\eps}}\right)\frac{1}{l_{i,j}(t)^{1-\eps}}\\
 &=: c_{i,j} \frac{1}{l_{i,j}(t)^{1-\eps}} .
\end{align*}
We can solve this ODE explicitly,
\begin{align*}
  l_{i,j}(t)=\left( l_{i,j}(0)^{(3-\eps)/2}+\frac12 c_{i,j} (3-\eps) t \right)^{1/(3-\eps)}.
\end{align*}
Therefore, we obtain a collapse if we have $S=0=S_{\varepsilon}$ and $c_{i,j} <0$.

That there are choices for the initial values and intensities under which all the conditions are satisfied is easy to see. We first take initial values which form a triangle with $l_{1,2}(0) >l_{2,3}(0)$ and $\xi_2=1$, then $c_{1,3}<0$ and we can choose $\xi_1$ and $\xi_3$ such that $S=0=S_{\varepsilon}$. For example, we can consider initial positions given in \cite[Section 4.2, p.140]{MP}:
  $$x_1(0)=(-1,0), \quad x_2(0)=(1,0), \quad x_3(0)=(1,\sqrt{2}\, );$$
then
  $$l_{1,2}(0)=2,\quad l_{2,3}(0)=\sqrt{2}, \quad l_{3,1}(0) = \sqrt{6}. $$
Taking $\xi_2=1$, then we deduce from $S=0=S_{\varepsilon}$ the following equations
\begin{align*}
 0=\frac{ 4}{\xi_3 }+\frac{2}{\xi_1 }+ 6
\end{align*}
and
\begin{align*}
0=\frac{2^{-1+\eps}}{\xi_3 }+\frac{2^{(-1+\eps)/2 }}{\xi_1 }+ 6^{(-1+\eps)/2 }.
\end{align*}
Choose $\eps =1/2$, the last equation becomes
  $$0=\frac{2^{-1/2}}{\xi_3 }+\frac{2^{-1/4}}{\xi_1 }+ 6^{-1/4}.$$
Solving these equations gives us
  $$\xi_1= 1.155616, \quad \xi_3= -0.517419. $$
With these data in hand, we can do a simulation and obtain a similar figure as \cite[Figure 4.4]{MP}.

In the above computations we have assumed $S_{\eps}=0$ to determine the intensities $\xi_1$ and $\xi_3$. This condition is sufficient for our purpose but it is not necessary (on the contrary $S$ must be 0). However, in view of \eqref{eq-ratio}, if $S_{\eps}\neq 0$, then the collapse of three point vortices will not be self-similar, see \cite[Section IV]{BB} for related discussions.

By a scaling argument this examples yield also a collapse in the torus. Let $x_i^{\lambda}(t):= \lambda^{\alpha}x_i(\lambda t)$ for $\lambda>0$, $\alpha\neq 0$ and $i=1,2,3$. Then
\begin{align*}
  \frac{\d}{\d t} x_i^{\lambda}(t)&= \lambda^{\alpha+1} \frac{\d x_i}{\d t}(\lambda t)
  =\lambda^{\alpha+1} \sum_{j=1,j\neq i}^3 \xi_j K_\eps (x_i(\lambda t) -x_j(\lambda t))\\
  &=\lambda^{\alpha+1}\frac{\lambda^{-\alpha}}{\lambda^{-\alpha(3-\eps)}} \sum_{j=1,j\neq i}^3 \xi_j K_\eps (\lambda^{\alpha} x_i(\lambda t) -\lambda^{\alpha} x_j(\lambda t))\\
    &=\frac{\lambda}{\lambda^{-\alpha(3-\eps)}} \sum_{j=1,j\neq i}^3 \xi_j K_\eps (x_i^{\lambda}(t) -x_j^{\lambda}(t)).
  \end{align*}
Hence, for $\alpha=-\frac{1}{3-\eps}$ the scaled point vortices $x_i^{\lambda}$ are again a solution to the initial data $x_i^{\lambda}(0):= \lambda^{\alpha}x_i(0)$. By taking now $\lambda$ sufficiently large we otain a collapse that occurs already in $[-1/2,1/2]^2$, without leaving the torus.

\end{document}